\documentclass[12pt]{article}
\usepackage[cmtip,arrow]{xy}
\usepackage{pb-diagram, pb-xy}
\usepackage{amssymb,epsfig,amsfonts,amsmath}
\frenchspacing
\mathsurround=2pt
\emergencystretch=5pt
\tolerance=400
\voffset=-2cm
\hoffset=-17mm
\textwidth=16cm
\textheight=640pt
\righthyphenmin=2
\makeatletter
\makeatother

\newtheorem{prop}{Proposition}

\newtheorem{Th}{Theorem}
\newtheorem{lemma}{Lemma}

\newfont{\ssdbl}{msbm8}
\newfont{\sdbl}{msbm9}
\newfont{\dbl}{msbm10 at 12pt}

\newcommand{\oo}{{\cal O}}
\newcommand{\ff}{{\cal F}}

\newcommand{\ad}{{\cal A}}

\newcommand{\res}{\mathop {\rm res}}

\newcommand{\tr}{\mathop {\rm Tr}}

\newcommand{\da}{\mathbb{A}}
\newcommand{\dz}{\mathbb{Z}}
\newcommand{\dc}{\mathbb{C}}
\newcommand{\lto}{\longrightarrow}

\newcommand{\F}{{\bf F}}

\newcommand{\D}{{\cal D}}

\def\C{{\mathbb C}}

\newcommand{\Div}{{\rm Div}}

\begin{document}

\author{D.V. Osipov, A.N. Parshin
\footnote{Both authors are supported by RFBR (grants
no.~11-01-00145-a and no.~11-01-12098) and by a program of President
of RF for supporting of Leading Scientific Schools (grant
no.~NSh-4713.2010.1).}  }

\title{Harmonic analysis and the Riemann-Roch theorem.}
\date{}

\maketitle


\noindent {\bf 1.} \ Let $D$ be a smooth projective curve over a
finite field $k$. It is known (see, e.g.,~\cite[\S3]{P1}) that the
Poisson summation formula applied to the discrete subgroup  $k(D)$
of the adelic space $\da_D$ implies the Riemann-Roch theorem on the
curve $D$. This result is an important step in the application of
harmonic analysis to the arithmetic of algebraic curves. In this
note we show, how to solve the analogous problem for the case of
dimension two. Namely, we will show how the Riemann-Roch theorem for
invertible sheaves  on a projective smooth algebraic surface $X$
over $k$ (in a variant without the Noether formula, see,
e.g.,~\cite{S}) is obtained from the two-dimensional Poisson
formulas (see~\cite[\S5.9]{OsipPar1} and \cite[\S13]{OsipPar2}).

First, we need some general proposition. Let $E = (I,F, V)$ be a $C_2$-space over the field $k$ (see~\cite{Osip}).
 Recall that for any $i,j \in I$
we have constructed in~\cite[\S5.2]{OsipPar1} a one-dimensional
$\C$-vector space of virtual measures $\mu (F(i) \mid F(j)) = \mu
(F(i) / F(l))^* \otimes_{\C} \mu (F(l) / F(j))$, where $l \in I$
such that $l \le i$, $l \le j$, and  $\mu(H)$ is the space of
$\C$-valued
 Haar measures on a $C_1$-space $H$.
The space $\mu (F(i) \mid F(j))$ does not depend on the choice of $l \in I$ up to a canonical isomorphism.

Let $0 \to A \to E \to B \to 0$ be an admissible triple of
$C_2$-spaces over $k$. Let $A = (J,G, W)$ as a $C_2$-space, and $W
=F(j)$ for some $j \in I$. Then $A$ is a $cC_2$-space and $B$ is a
$dC_2$-space (see~\cite[\S5.1]{OsipPar1}).  Let $o \in I$ and $\mu
\in \mu(W/ F(o) \cap W)$, $\nu \in \nu(F(o)/ F(o) \cap W)^*$. Then
in~\cite[form.~(164)]{OsipPar1} we have constructed the
characteristic element $\delta_{A, \mu \otimes \nu} \in
\D'_{F(o)}(E)$. We note that $\mu \otimes \nu  \in \mu (F(o) \mid
W)$. Therefore we can replace $\mu \otimes \nu$ by $\eta \in \mu
(F(o) \mid W)$ and write $\delta_{A, \eta} \in \D'_{F(o)}(E)$
instead of the previous notation.

Let $0 \to L \to E \to M \to 0$  be an admissible triple of
$C_2$-spaces over $k$ such that $L$ is a $cfC_2$-space and $M$ is a
$dfC_2$-space (see~\cite[\S5.1]{OsipPar1}).
In~\cite[form.~(169)]{OsipPar1} we have constructed the
characteristic element $\delta_{L} \in \D_{F(o)}(E)$. We note that
for any $i,j \in I$ the space $L$ defines a non-zero element
$\mu_{L, F(i), F(j)} \in \mu (F(i) \mid F(j))$ in the following way.
Let $L =(K, T, U)$ as a $C_2$-space. Choose some $l \in I$ such that
$l \le i$, $l \le j$. Then $\mu_{L, F(i), F(j)} = \mu_{L, F(l),
F(i)}^{-1} \otimes \mu_{L, F(l), F(j)}$, where for any $m \le n \in
I$ we define $\mu_{L,m,n} \in \mu(F(n)/F(m)) $ as $\mu_{L,m,n}( U
\cap F(n)/ U \cap F(m))=1$. The element $\mu_{L, F(i), F(j)}$ does
not depend on the choice of $l \in I$.

There is a natural pairing $< \cdot , \cdot > : \D_{F(o)}(E) \times
D'_{F(o)}(E) \to \C$. From the above definitions it is easy to prove
the following proposition.
\begin{prop} \label{prc}
$$
<\delta_L, \delta_{A, \eta} > = \frac{\eta}{\mu_{L, F(o), W}}
\mbox{.}
$$
\end{prop}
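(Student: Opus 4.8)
The plan is to prove the identity by unwinding the three ingredients that enter it — the characteristic element $\delta_L \in \D_{F(o)}(E)$ of \cite[form.~(169)]{OsipPar1}, the characteristic element $\delta_{A,\eta} \in \D'_{F(o)}(E)$ of \cite[form.~(164)]{OsipPar1}, and the pairing $\langle\cdot,\cdot\rangle$ — and reducing the whole statement to a single comparison of two vectors inside one fixed one-dimensional space. First I would note that the right-hand side is genuinely a complex number: since $W=F(j)$, taking $i=o$ in the virtual-measure construction places $\mu_{L,F(o),W}$ in the line $\mu(F(o)\mid W)$, and $\eta$ lies in the same line, so the quotient $\eta/\mu_{L,F(o),W}$ is a well-defined scalar that, by the stated independence-of-$l$ properties, does not depend on the auxiliary index used to trivialize the line. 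To prepare the comparison I would fix once and for all the common trivializing index $l$ with $F(l)=F(o)\cap W$, so that both $\eta=\mu\otimes\nu$ (with $\mu\in\mu(W/F(o)\cap W)$, $\nu\in\mu(F(o)/F(o)\cap W)^*$) and $\mu_{L,F(o),W}=\mu_{L,F(l),F(o)}^{-1}\otimes\mu_{L,F(l),W}$ are presented as elements of the same tensor product $\mu(F(o)/(F(o)\cap W))^*\otimes\mu(W/(F(o)\cap W))$.

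Next I would recall the explicit shape of the two characteristic elements. The element $\delta_{A,\eta}$ is the delta-distribution concentrated along the $cC_2$-subspace $A$ (here $W=A$, with $dC_2$-quotient $B$) carrying the virtual measure $\eta\in\mu(F(o)\mid W)$; pairing it against a test element of $\D_{F(o)}(E)$ amounts to integrating that element over $A$ with respect to $\eta$, localized at the level $F(o)$. The element $\delta_L$, built from the $cfC_2$-structure on $L=(K,T,U)$, is the normalized characteristic element of the lattice $U$: on each subquotient $F(n)/F(m)$ the induced Haar measure is exactly $\mu_{L,m,n}$, the measure for which $(U\cap F(n))/(U\cap F(m))$ has volume one.

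The main step is then the pairing computation. Substituting these descriptions into $\langle\delta_L,\delta_{A,\eta}\rangle$, the integral of the normalized characteristic element $\delta_L$ against the delta-distribution $\delta_{A,\eta}$ supported on $A=W$ collapses, by admissibility of both triples, to the $\eta$-volume of the $L$-normalized subquotient attached to the pair $(F(o),W)$. Since $\mu_{L,F(o),W}$ is by construction precisely the virtual measure assigning that subquotient volume one, the $\eta$-volume in question is exactly the scalar obtained by dividing $\eta$ by $\mu_{L,F(o),W}$ in the line $\mu(F(o)\mid W)$, which yields the asserted value $\eta/\mu_{L,F(o),W}$.

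The hard part will be the bookkeeping of the canonical identifications of the one-dimensional measure spaces rather than any analytic content. One must verify that the dual factor $\nu\in\mu(F(o)/(F(o)\cap W))^*$ of $\eta$ is paired with the corresponding factor $\mu_{L,F(l),F(o)}^{-1}$ of $\mu_{L,F(o),W}$ under the canonical duality $\mu(F(o)/F(l))^*\otimes\mu(F(o)/F(l))\to\C$, and that the remaining factors $\mu$ and $\mu_{L,F(l),W}$ on $W/(F(o)\cap W)$ are compared consistently; the common choice $F(l)=F(o)\cap W$ makes these matchings transparent, and the independence-of-$l$ statements guarantee that the resulting scalar is intrinsic. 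Once these identifications are in place the computation is immediate, completing the proof.
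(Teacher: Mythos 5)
Your proposal is correct and follows the same route as the paper, which in fact offers no written proof at all beyond the remark that the proposition follows easily from the definitions: the intended argument is precisely your unwinding, in which $\eta=\mu\otimes\nu$ and $\mu_{L,F(o),W}=\mu_{L,F(l),F(o)}^{-1}\otimes\mu_{L,F(l),W}$ are placed in the common line $\mu(F(o)\mid W)$ via the trivializing subspace $F(l)=F(o)\cap W$, and the pairing of the $U$-normalized characteristic element $\delta_L$ with the delta-distribution $\delta_{A,\eta}$ reduces to pairing $\nu$ against $\mu_{L,F(l),F(o)}$ and comparing $\mu$ with $\mu_{L,F(l),W}$. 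Your final paragraph correctly identifies that the only real content is this measure-line bookkeeping, so the proposal fills the gap the authors left to the reader in exactly the expected way.
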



\vspace{0.5cm} \vspace{0.5cm} \noindent {\bf 2.} \
 Let $X$ be a
smooth projective algebraic  surface over a finite field $k$. Let
$\mid k \mid = q$. For any quasicoherent sheaf $\ff$ on $X$ there is
an adelic complex $\ad_X (\ff)$ such that $H^* (\ad_X(\ff)) = H^*
(X, \ff)$. Let $C \in \Div(X)$. For the sheaf $\oo_X(C)$ on $X$  we
will write this complex  in the following way:
$$
\da_{0,C} \oplus \da_{ 1,C} \oplus \da_{2,C} \lto \da_{01,C} \oplus
\da_{ 02, C} \oplus \da_{ 12, C}
 \lto
 \da_{ 012, C}  \mbox{,}
 $$
 where $\da_{ *, \, C} = \da_{X, *}(\oo_X(C))$ (see the corresponding notations and definitions
 in~\cite[\S 14.1]{OsipPar2}),
and we have omitted  indication on $X$ in the notations of subgroups
of the adelic complex, because we will work only with one algebraic
surface $X$ during this note.
  We note that that all the groups $\da_{ *, C}$ are subgroups of the group $\da_{012, C}$.
  Besides, the following groups does not depend on $C \in \Div(X)$:
  $$\da_{ 0, C} = \da_{ 0} \mbox{,} \quad  \da_{01,C}= \da_{ 01} \mbox{,}
  \quad \da_{02,C}= \da_{ 02} \mbox{,} \quad  \da_{012,C}= \da_{ 012} = \da \mbox{.}$$
Moreover, $\da \subset \prod_{x \in D} K_{x,D}$, where $x \in D$
runs over all pairs with irreducible curve
 $D$ on $X$ and $x$ is a point on $D$.
The ring $K_{x,D}$ is a finite product of two-dimensional local fields with the last residue field $k(x)$.

We fix a non-zero rational differential form $\omega \in
\Omega^2_{k(X)/k}$. Let $(\omega) \in \Div(X)$ be the corresponding
divisor. The following pairing (which depends on $\omega$) is
well-defined, symmetric   and non-degenerate:
\begin{equation}   \label{pa}
\da \times \da \lto k \quad : \quad  \{ f_{x,D}\} \times \{ g_{x,D}
\}
 \mapsto \sum_{x \in D}  \tr\nolimits_{k(x)/k} \circ \res\nolimits_{x,D} (f_{x,D}  \, g_{x,D}  \, \omega)  \mbox{,}
\end{equation}
where $\res_{x,D}$ is the two-dimensional residue. For any
$k$-subspace $V \subset \da$ we will denote by $V^{\bot} $ the
annihilator   of $V$ in $\da$ with respect to the
pairing~\eqref{pa}. Using the reciprocity laws for the residues of
differential forms on $X$ (the reciprocity laws "around a point" and
the reciprocity laws "along a curve") one can prove the following
proposition.
\begin{prop} \label{prd}
We have the following properties.
\begin{gather*}
\da_{ 0}^{\bot} = \da_{01} + \da_{ 02} \mbox{,} \quad
 \da_{1,C}^{\bot} = \da_{ 01} + \da_{12, (\omega) -C } \mbox{,}
 \quad
 \da_{2,C}^{\bot} = \da_{ 02} + \da_{12, (\omega) -C }
 \\
 \da_{ 01}^{\bot} = \da_{01} \mbox{,}  \quad
 \da_{ 02}^{\bot} = \da_{ 02} \mbox{,}
 \quad
 \da_{ 12,C}^{\bot} = \da_{12, (\omega) -C } \mbox{.}
  \end{gather*}
\end{prop}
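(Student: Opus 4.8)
The plan is to reduce all six identities to three basic dualities and then to derive the rest formally. The pairing~\eqref{pa} is a sum of local residue pairings $\langle f, g\rangle_{x,D} = \tr_{k(x)/k} \res_{x,D}(f_{x,D}\, g_{x,D}\, \omega)$ over the flags $x \in D$, so questions about annihilators can be analysed flag by flag, together with the restricted-product structure of $\da$. I would isolate as basic building blocks: (i) the local statement $\da_{12,C}^{\bot} = \da_{12,(\omega)-C}$; and (ii) the two self-duality statements $\da_{01}^{\bot} = \da_{01}$ and $\da_{02}^{\bot} = \da_{02}$; the remaining three formulas then follow for free.

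First I would prove $\da_{12,C}^{\bot} = \da_{12,(\omega)-C}$. Here $\da_{12,C}$ is the integral adelic subgroup, locally $\oo_{x,D}(C)$ at each flag, so both the isotropy and the precise value of the annihilator are purely local: at each $(x,D)$ the residue pairing identifies the dual of the lattice $\oo_{x,D}(C)$ with $\oo_{x,D}((\omega)-C)$, which is the local form of Serre duality, the shift by $(\omega)$ being exactly the local contribution of the canonical divisor (the different of $\omega$). Assembling these local dualities over the restricted product yields the global statement.

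Next comes the core of the argument, the self-dualities $\da_{01}^{\bot} = \da_{01}$ and $\da_{02}^{\bot} = \da_{02}$. Isotropy, i.e.\ the inclusions $\da_{01} \subseteq \da_{01}^{\bot}$ and $\da_{02} \subseteq \da_{02}^{\bot}$, is where the reciprocity laws enter: the vanishing of $\langle \cdot, \cdot\rangle$ on $\da_{01} \times \da_{01}$ follows from the reciprocity law "along a curve" (for fixed $D$, the sum $\sum_{x \in D} \res_{x,D}$ vanishes), and the vanishing on $\da_{02} \times \da_{02}$ from the reciprocity law "around a point" (for fixed $x$, the sum $\sum_{D \ni x} \res_{x,D}$ vanishes). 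The opposite inclusions $\da_{01}^{\bot} \subseteq \da_{01}$ and $\da_{02}^{\bot} \subseteq \da_{02}$ are maximality statements and constitute the main obstacle: one must show these isotropic subspaces are as large as possible. I would handle this through the filtration of the adelic complex, reducing the maximality to the one-dimensional adelic self-duality on each curve $D$ (for $\da_{01}$) and to the self-duality of the two-dimensional local fields around each point $x$ (for $\da_{02}$), both with respect to the residue pairing; non-degeneracy of~\eqref{pa} then upgrades the isotropic inclusion to an equality.

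Finally, the remaining three identities follow formally. The simplicial structure of the Parshin--Beilinson adelic complex gives the intersection relations $\da_0 = \da_{01} \cap \da_{02}$, $\da_{1,C} = \da_{01} \cap \da_{12,C}$ and $\da_{2,C} = \da_{02} \cap \da_{12,C}$ inside $\da$ (intersection of the subspaces corresponding to intersection of index sets). Using the general identity $(V \cap W)^{\bot} = V^{\bot} + W^{\bot}$, valid here because the relevant sums of adelic subspaces are closed, together with (i)--(ii) one computes
\begin{gather*}
\da_0^{\bot} = (\da_{01} \cap \da_{02})^{\bot} = \da_{01}^{\bot} + \da_{02}^{\bot} = \da_{01} + \da_{02} \mbox{,}\\
\da_{1,C}^{\bot} = (\da_{01} \cap \da_{12,C})^{\bot} = \da_{01}^{\bot} + \da_{12,C}^{\bot} = \da_{01} + \da_{12,(\omega)-C} \mbox{,}\\
\da_{2,C}^{\bot} = (\da_{02} \cap \da_{12,C})^{\bot} = \da_{02}^{\bot} + \da_{12,C}^{\bot} = \da_{02} + \da_{12,(\omega)-C} \mbox{,}
\end{gather*}
which is the assertion. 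The only genuinely hard points are the maximality of $\da_{01}$ and $\da_{02}$, and the closedness needed for $(V \cap W)^{\bot} = V^{\bot} + W^{\bot}$; everything else is a local residue computation together with the two reciprocity laws.
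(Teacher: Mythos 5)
The paper offers no detailed proof of this statement: Proposition~\ref{prd} is given with only the remark that it can be proved ``using the reciprocity laws for the residues of differential forms on $X$''. Measured against that indication, your skeleton is aligned with the intended route: reciprocity along a curve is indeed what gives isotropy of $\da_{01}$, reciprocity around a point gives isotropy of $\da_{02}$, the equality $\da_{12,C}^{\bot}=\da_{12,(\omega)-C}$ is a flag-by-flag computation in the first valuation (only the multiplicity of $(\omega)$ along each curve enters, since the lattices $\da_{12,C}$ are cut out by first-valuation conditions), and the intersection relations $\da_{0}=\da_{01}\cap\da_{02}$, $\da_{1,C}=\da_{01}\cap\da_{12,C}$, $\da_{2,C}=\da_{02}\cap\da_{12,C}$ are correct and are used by the paper itself (in triples~\eqref{d1} and~\eqref{d2}).

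The genuine gap is the step you declare formal: $(V\cap W)^{\bot}=V^{\bot}+W^{\bot}$. Only the inclusion $\supseteq$ is formal; the inclusion $\subseteq$ is, for exactly these pairs, where the remaining content of the proposition lives. For instance $\da_{0}^{\bot}\subseteq\da_{01}+\da_{02}$ asserts that every adele orthogonal to all of $k(X)$ splits as a sum of an element of $\da_{01}$ and one of $\da_{02}$ --- an adelic converse to the two reciprocity laws, comparable in depth to the maximality statements you isolate, and not a consequence of them by lattice algebra. Your one-line justification, ``valid here because the relevant sums of adelic subspaces are closed'', is not yet an argument: you would need (a) a linear-topological duality formalism on $\da$ in which biannihilator identities hold for closed subspaces, and (b) actual proofs that $\da_{01}+\da_{02}$, $\da_{01}+\da_{12,(\omega)-C}$ and $\da_{02}+\da_{12,(\omega)-C}$ are closed in it; neither is routine, and note that the paper derives the self-duality $\check{\da}=\da$ of the $C_2$-space $\da$ \emph{from} Proposition~\ref{prd}, so quietly invoking a duality formalism of this kind risks circularity. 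By contrast, your handling of the maximality claims $\da_{01}^{\bot}\subseteq\da_{01}$ and $\da_{02}^{\bot}\subseteq\da_{02}$, while also only sketched, at least names the correct inputs (levelwise reduction along the first valuation to the classical self-duality $k(D)^{\bot}=k(D)$ in the adeles of each curve, and local duality at each point); the honest repair is to apply that same flag-and-level analysis directly to $\da_{0}^{\bot}$, $\da_{1,C}^{\bot}$ and $\da_{2,C}^{\bot}$, testing against elements supported on a single curve or at a single point, rather than routing through the unproved annihilator-of-intersection identity.
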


We note that $\da = \mathop{\lim\limits_{\longrightarrow}}\limits_{C
\in \Div(X)} \da_{12,C} $, and $\da_{12,C} =
\mathop{\lim\limits_{\longleftarrow}}\limits_{C' \le C } \da_{12,C}/
\da_{12,C'}$. For any $C' \le C$ the $k$-space
$\da_{12,C}/\da_{12,C'}$ has the natural structure of  a complete
$C_1$-space over the field $k$. Hence we obtain that the $k$-space
$\da$ has the following structure of a complete $C_2$-space over
$k$: $(\Div(X), F, \da)$, where $F(C)= \da_{12,C}$ for $C \in
\Div(X)$. For simplicity  we will use the same notation $\da$ for
this $C_2$-space, i.e. we will omit the partially ordered set
$\Div(X)$ and the function $F$. The subspaces $\da_{*, C}$ of $\da$
(and the factor-spaces by these subspaces) have induced structures
of $C_2$-spaces, which we will also denote by the same notations
$\da_{*,C}$ (by notations for factor-spaces).

From proposition~\ref{prd} it follows that the $C_2$-dual space
(see~\cite[\S 5.1]{OsipPar1}) $\check{\da}$ coincides with the
$C_2$-space $\da$ itself:
$$
\check{\da} = \mathop{\lim\limits_{\longleftarrow}}\limits_{C \in
\Div(X)} \mathop{\lim\limits_{\longrightarrow}}\limits_{C' \le C }
\da_{12,C'}^{\bot}/\da_{12,C}^{\bot} =
\mathop{\lim\limits_{\longrightarrow}}\limits_{C' \in \Div(X)  }
\mathop{\lim\limits_{\longleftarrow}}\limits_{C \ge C' }
\da_{12,(\omega) - C'}/\da_{12,(\omega) - C} = \da \mbox{.}
$$


\vspace{0.5cm} \noindent {\bf 3.} \ For any $E \in \Div(X)$ we
denote  $h^i(E)= \dim_k H^i(X, \oo_X(E))$, where $0 \le i \le 2$. We
fix any $H, C \in \Div(X)$. We consider the following admissible
triple of complete $C_2$-spaces over $k$:
\begin{equation}  \label{s1}
0 \lto \da_{0} \lto \da_{01} \lto \da_{ 01}/\da_{0} \lto 0 \mbox{.}
\end{equation}
The space $\da_{0}$ is a $cfC_2$-space, and the space $\da_{
01}/\da_{0} $ is a $dfC_2$-space. Therefore there is the
characteristic element $\delta_{\da_{0}} \in \D_{\da_{1,H}}(\da_{
01})$.

Now we consider the following admissible triple of complete
$C_2$-spaces over $k$:
\begin{equation}  \label{s2}
0 \lto \da_{1,C} \lto \da_{ 01} \lto \da_{01}/\da_{1,C} \lto 0
\mbox{.}
\end{equation}
 We note that the space
$\da_{01}$ is a $dfC_2$-space. Therefore for any $H', C' \in
\Div(X)$ there is a natural element  $\delta_{H', C'} \in \mu(
\da_{1,H'} \mid \da_{1,C'})$ which is uniquely defined by the
following two conditions: 1) $\delta_{H', M'} \otimes \delta_{M',
C'} = \delta_{H', C'}$ for any $H', M', C' \in \Div(X)$, and 2) if
$H' \le C'$ then $\delta_{H',C'} \in \mu(\da_{1,C'}/ \da_{1,H'}) $
is defined as $\delta_{H',C'}( (0))=1$, where $(0)$ is the zero
subspace in the discrete $C_1$-space $ \da_{1,C'}/ \da_{1,H'}$.
Besides, the space $\da_{1,C}$ is a $cC_2$-space, and the space
$\da_{01}/\da_{1,C}$ is a $dC_2$-space. Hence there is the
characteristic element $\delta_{\da_{1,C}, \, \delta_{H,C}}  \in
\D'_{\da_{1,H}}(\da_{ 01})$.
\begin{lemma} \label{lem1}  We have the following equality:
$$
<\delta_{\da_{0}} \, , \, \delta_{\da_{1,C}, \, \delta_{H,C}} > =
q^{h^0(C)- h^0(H)}  \mbox{.}
$$
\end{lemma}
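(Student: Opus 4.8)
The plan is to use Proposition~\ref{prc}, which reduces the pairing to a ratio of the two relevant measure elements. I would apply it to the admissible triple~\eqref{s2}, whose subspace $L = \da_{1,C}$ plays the role of the $cfC_2$-space (providing $\delta_{\da_{1,C}}$-type data), while the subspace $A = \da_{0}$ from~\eqref{s1} supplies the characteristic element $\delta_{\da_0}$ of distribution type. The ambient $C_2$-space here is $\da_{01}$, and the distinguished subspace is $F(o) = \da_{1,H}$. The first step is therefore to match up the notation of the general setup in Part~1 with the concrete spaces here: identify $W$, $F(o)$, and the element $\eta$ appearing in Proposition~\ref{prc} with $\da_{1,C}$, $\da_{1,H}$, and the measure element $\delta_{H,C}$ respectively, so that the left-hand side becomes $\langle \delta_{\da_0}, \delta_{\da_{1,C},\,\delta_{H,C}}\rangle$.

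Once the objects are matched, Proposition~\ref{prc} gives directly
$$
\langle \delta_{\da_0},\, \delta_{\da_{1,C},\,\delta_{H,C}}\rangle = \frac{\delta_{H,C}}{\mu_{\da_0,\,\da_{1,H},\,\da_{1,C}}}\mbox{,}
$$
where the denominator is the canonical nonzero element of $\mu(\da_{1,H}\mid\da_{1,C})$ attached to the $cfC_2$-space $\da_0$ by the recipe recalled in Part~1. Both numerator and denominator live in the same one-dimensional complex line $\mu(\da_{1,H}\mid\da_{1,C})$, so their ratio is a well-defined scalar in $\C$, and the task becomes computing this scalar explicitly and showing it equals $q^{h^0(C)-h^0(H)}$.

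The core computation I would carry out is to evaluate both measure elements on a common subquotient. Using the defining normalizations — $\delta_{H,C}((0))=1$ on the discrete space $\da_{1,C}/\da_{1,H}$ when $H\le C$, and $\mu_{\da_0,\,m,\,n}(U\cap F(n)/U\cap F(m))=1$ where $U$ corresponds to the lattice $\da_0$ inside $\da_{01}$ — I would reduce to comparing the normalized Haar measure coming from $\delta_{H,C}$ with the one coming from the $\da_0$-lattice on the finite-dimensional $k$-vector space $\da_{1,C}/\da_{1,H}$ (treating the general case by the cocycle conditions 1) in both definitions, so it suffices to handle $H\le C$). The ratio of these two normalizations is $q$ raised to the difference of the $k$-dimensions of the two relevant lattice quotients, and the key input is that $\dim_k\bigl((\da_{01}\cap\text{fiber})/\da_0\text{-part}\bigr)$ computes precisely $h^0$ via the adelic complex: the cohomology $H^0(X,\oo_X(E))$ is cut out inside $\da_{01}$ by the subspace $\da_0$, so the dimension count over $C$ versus $H$ produces exactly $h^0(C)-h^0(H)$.

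The main obstacle I anticipate is the bookkeeping in this dimension count: one must track carefully how the virtual-measure construction, with its choice of an auxiliary $l\in\Div(X)$ with $l\le H$ and $l\le C$, interacts with the zeroth adelic cohomology, and verify that the finite-dimensionality needed to extract a power of $q$ genuinely holds (i.e.\ that the intervening quotients are finite-dimensional $k$-spaces, hence finite since $k$ is finite). Keeping the orientation of the tensor factors $\mu(\cdot/\cdot)^*\otimes\mu(\cdot/\cdot)$ consistent — so that the ratio lands as $q^{h^0(C)-h^0(H)}$ and not its inverse — is the delicate sign-of-the-exponent point where I would be most careful.
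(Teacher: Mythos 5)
Your proposal is correct and follows essentially the same route as the paper's own proof: apply proposition~\ref{prc} with $L=\da_{0}$, $W=\da_{1,C}$, $F(o)=\da_{1,H}$, $\eta=\delta_{H,C}$, reduce to the case $H\le C$ by the multiplicativity (cocycle) properties of both measure elements, and conclude from $\da_{0}\cap\da_{1,E}=H^0(X,\oo_X(E))$ that the ratio equals $q^{\dim_k (\da_{0}\cap\da_{1,C})/(\da_{0}\cap\da_{1,H})}=q^{h^0(C)-h^0(H)}$. One small slip worth flagging: your opening sentence swaps the roles of $L$ and $A$ (in fact $\da_{0}$ is the $cfC_2$-space supplying $\delta_L\in\D_{\da_{1,H}}(\da_{01})$ and $\da_{1,C}$ is the $cC_2$-space carrying the measure datum $\eta$), but your displayed formula and the subsequent computation assign them correctly, so the argument itself is sound.
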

\begin{proof}
We will use proposition~\ref{prc}. From this proposition it follows
that it is enough to consider $H \le C$. In this case, by this
proposition again, we have $<\delta_{\da_{0}}, \delta_{\da_{1,C}, \,
\delta_{H,C}} > = q^{\dim_k V}$, where the $k$-vector space $V =
(\da_{0} \cap \da_{1,C})/ (\da_{0} \cap \da_{1,H}) $. Now we use
$\da_{0} \cap \da_{1,E} = H^0(X, \oo_X(E)) $ for any $E \in
\Div(X)$. The lemma is proved.
\end{proof}

Now we fix any $P, Q \in \Div(X)$.
We consider the following admissible
triple of complete $C_2$-spaces over $k$:
\begin{equation} \label{d1}
0 \lto \da_{02}/ \da_{0} \lto \da/ \da_{ 01} \lto \da/(\da_{02} +
\da_{01}) \lto 0 \mbox{,}
\end{equation}
where we use that $\da_{0} = \da_{01} \cap \da_{02}$. The space
$\da_{02}/\da_{0}$ is a $cfC_2$-space, and the space $\da/(\da_{ 01}
+ \da_{02}) $ is a $dfC_2$-space. Therefore there is the
characteristic element $\delta_{\da_{02}/\da_{0}} \in
\D_{\da_{12,P}/ \da_{1,P}}(\da /\da_{ 01})$.

Now we consider the following admissible triple of complete
$C_2$-spaces over $k$:
\begin{equation} \label{d2}
0 \lto \da_{12,Q}/ \da_{1,Q}  \lto \da/ \da_{ 01} \lto
\da/(\da_{12,Q} + \da_{01}) \lto 0   \mbox{,}
\end{equation}
where we use that $\da_{1,Q} = \da_{01} \cap \da_{12,Q}$.
 We note that the space
$\da/\da_{ 01}$ is a $cfC_2$-space. Therefore for any $P', Q' \in
\Div(X)$ there is the following natural element  $1_{P', Q'} \in
\mu( \da_{12,P'}/\da_{1,P'} \, \mid \, \da_{12,Q'}/\da_{1,Q'})$
which is uniquely defined by the following two conditions: 1)
$1_{P', R'} \otimes 1_{R', Q'} = 1_{P', Q'}$ for any $P', R', Q' \in
\Div(X)$, and 2) if $P' \le Q'$ then $1_{P',Q'} \in
\mu((\da_{12,Q'}/\da_{1,Q'})/ (\da_{12,P'}/\da_{1,P'})) $ is defined
as $1_{P',Q'}((\da_{12,Q'}/\da_{1,Q'})/ (\da_{12,P'}/\da_{1,P'})
)=1$, since $(\da_{12,Q'}/\da_{1,Q'})/ (\da_{12,P'}/\da_{1,P'})$ is
a compact $C_1$-space. Besides, the space $\da_{12,Q}/ \da_{1,Q} $
is a $cC_2$-space, and the space $\da/(\da_{12,Q} + \da_{01})$ is a
$dC_2$-space. Hence there is the characteristic element
$\delta_{\da_{12,Q} / \da_{1,Q} , \, 1_{P,Q}} \in
\D'_{\da_{12,P}/\da_{1,P}}(\da/\da_{ 01})$.
\begin{lemma} \label{lem2} We have the following equality:
$$
<  \delta_{\da_{02}/\da_{0}}     \,  , \,  \delta_{\da_{12,Q} /
\da_{1,Q} , \, 1_{P,Q}}
 > = q^{h^2(Q)- h^2(P)}
\mbox{.}
$$
\end{lemma}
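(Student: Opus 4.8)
The plan is to imitate the proof of Lemma~\ref{lem1}, replacing the kernel-of-the-adelic-complex computation (which produced $H^0$) by the dual cokernel computation (which will produce $H^2$). First I would invoke Proposition~\ref{prc} with ambient $C_2$-space $\da/\da_{01}$, with $L = \da_{02}/\da_0$, $A = \da_{12,Q}/\da_{1,Q}$, $F(o) = \da_{12,P}/\da_{1,P}$, $W = \da_{12,Q}/\da_{1,Q}$ and $\eta = 1_{P,Q}$. This yields at once
$$
<\delta_{\da_{02}/\da_0},\,\delta_{\da_{12,Q}/\da_{1,Q},\,1_{P,Q}}> = \frac{1_{P,Q}}{\mu_{\da_{02}/\da_0,\,\da_{12,P}/\da_{1,P},\,\da_{12,Q}/\da_{1,Q}}} \mbox{,}
$$
an equality of complex numbers valid for all $P,Q \in \Div(X)$; write $r(P,Q)$ for this scalar.

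Next I would reduce to the case $P \le Q$. Both $1_{\bullet,\bullet}$ and $\mu_{L,\bullet,\bullet}$ are multiplicative in their two divisor-arguments (by condition~1) in the definition of $1_{P,Q}$, and by the corresponding factorisation $\mu_{L, F(i), F(j)} = \mu_{L, F(l), F(i)}^{-1} \otimes \mu_{L, F(l), F(j)}$ recorded before Proposition~\ref{prc}), so their ratio satisfies $r(P,Q) = r(P,R)\,r(R,Q)$. Choosing $R \ge P$ and $R \ge Q$ and writing $r(R,Q) = r(Q,R)^{-1}$ reduces everything to pairs whose first argument is $\le$ the second; since $q^{h^2(Q)-h^2(P)}$ telescopes in exactly the same way, it suffices to match the two in the case $P \le Q$.

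Assume now $P \le Q$, so that $F(P) = \da_{12,P}/\da_{1,P} \subseteq F(Q) = \da_{12,Q}/\da_{1,Q}$ inside $\da/\da_{01}$ and $G = F(Q)/F(P)$ is a compact $C_1$-space. Both $1_{P,Q}$ and $\mu_L := \mu_{L,F(P),F(Q)}$ are then Haar measures on $G$: by definition $1_{P,Q}$ gives $G$ total mass $1$, while $\mu_L$ gives mass $1$ to the subspace $V = (U \cap F(Q))/(U \cap F(P))$, where $U = (\da_{02}+\da_{01})/\da_{01} \cong \da_{02}/\da_0$ is the underlying space of $L$ (here I use $\da_0 = \da_{01}\cap\da_{02}$). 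Hence $r(P,Q) = [G:V]^{-1} = q^{-\dim_k(G/V)}$, the index being finite because $G/V$ will turn out to be finite-dimensional.

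It remains to identify $G/V$. Using $\da_0 = \da_{01}\cap\da_{02}$ and $\da_{1,E} = \da_{01}\cap\da_{12,E}$, the projection $\da/\da_{01} \to \da/(\da_{01}+\da_{02})$ has kernel $U$, so it induces $F(Q)/(U\cap F(Q)) \cong (\da_{12,Q}+\da_{01}+\da_{02})/(\da_{01}+\da_{02})$ and likewise for $P$; a short diagram chase then gives
$$
G/V \cong (\da_{12,Q}+\da_{01}+\da_{02})/(\da_{12,P}+\da_{01}+\da_{02}) = \Ker\bigl(H^2(X,\oo_X(P)) \lto H^2(X,\oo_X(Q))\bigr) \mbox{,}
$$
where I use the adelic description $H^2(X,\oo_X(E)) = \da/(\da_{01}+\da_{02}+\da_{12,E})$ and the surjectivity of the transition map for $P \le Q$. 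Thus $\dim_k(G/V) = h^2(P)-h^2(Q)$ and $r(P,Q) = q^{h^2(Q)-h^2(P)}$, as claimed. The one genuinely delicate point is fixing the normalisation of the two measures, and hence the sign of the exponent: because $\da/\da_{01}$ is a $cfC_2$-space the quotient $G$ is compact rather than discrete, and this is exactly what flips the sign relative to Lemma~\ref{lem1} and converts the $H^0$-computation there into the $H^2$-computation here.
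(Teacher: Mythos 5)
Your proposal is correct and follows essentially the same route as the paper's own proof: invoke Proposition~\ref{prc}, reduce by multiplicativity to comparable divisors, compute the ratio $1_{P,Q}/\mu_{L,F(P),F(Q)}$ as an index, and identify the resulting quotient with the adelic description $H^2(X,\oo_X(E)) = \da/(\da_{01}+\da_{02}+\da_{12,E})$. The only (immaterial) difference is orientation --- you reduce to $P \le Q$ and read off the kernel of the surjection $H^2(X,\oo_X(P)) \to H^2(X,\oo_X(Q))$, while the paper reduces to $P \ge Q$ and computes $\dim_k W$ for $W = (\da_{01}+\da_{02}+\da_{12,P})/(\da_{01}+\da_{02}+\da_{12,Q})$ directly; your explicit treatment of the compact-versus-discrete normalisation and the sign of the exponent correctly fills in what the paper leaves implicit.
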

\begin{proof}
We will use proposition~\ref{prc}. By this proposition, it is enough
to consider $P \ge Q$. In this case, by this proposition again, we
have   $ <  \delta_{\da_{02}/\da_{0}} \, , \, \delta_{\da_{12,Q} /
\da_{1,Q} , \, 1_{P,Q}}
 >
= q^{\dim_k W}$, where the $k$-vector space  $W = (\da_{01} +
\da_{02} + \da_{12,P})/(\da_{01} + \da_{02} + \da_{12,Q}) $. Now we
use that from the adelic complex $\ad_X(\oo_X(E))$ we have
$\da/(\da_{01} + \da_{02} + \da_{12,E}) = H^2(X, \oo_X(E)) $ for any
$E \in \Div(X)$. The lemma is proved.
\end{proof}

Now we suppose that $Q = (\omega) - C$ and $P = (\omega) -H $. From
proposition~\ref{prd} it follows that triple~\eqref{d1} is a
$C_2$-dual sequence to triple~\eqref{s1}, and triple~\eqref{d2} is a
$C_2$-dual sequence to triple~\eqref{s2}.  We have also the
two-dimensional Fourier transforms $\F : \D_{\da_{1,H}}(\da_{ 01})
\to \D_{\da_{12,P}/ \da_{1,P}}(\da /\da_{ 01})$  and  $\F :
\D'_{\da_{1,H}}(\da_{ 01}) \to \D'_{\da_{12,P}/ \da_{1,P}}(\da
/\da_{ 01})$ (see~\cite[\S 5.4.2]{OsipPar1} and~\cite[\S
8.2]{OsipPar2}), which we denote by the same letter, although they
act from various spaces.  Now by the two-dimensional Poisson formula
II (see~\cite[th.~3]{OsipPar1}) we have $\F (\delta_{\da_{0}})=
\delta_{\da_{02}/\da_{0}}$. By the two-dimensional Poisson formula I
(see~\cite[th.~2]{OsipPar1}) we have $\F (\delta_{\da_{1,C}, \,
\delta_{H,C}}) = \delta_{\da_{12,Q} / \da_{1,Q} , \, 1_{P,Q}}$. (We
used  that according to~\cite[form.~(103)]{OsipPar1} we have $\mu(
\da_{1,H} \mid \da_{1,C}) = \mu( \da_{12,P}/\da_{1,P} \, \mid \,
\da_{12,Q}/\da_{1,Q})$, and $ \delta_{H,C} \mapsto 1_{P,Q}$ under
this isomorphism.) Now since $\F \circ \F (g)= g$ for $g =
\delta_{\da_{0}}$ or $g = \delta_{\da_{1,C}, \, \delta_{H,C}}$, and
the maps $\F$ are conjugate with respect to each other
(see~\cite[prop.~24]{OsipPar1}), we have that $<\delta_{\da_{0}} ,
\delta_{\da_{1,C}, \, \delta_{H,C}} > =< \F (\delta_{\da_{0}})    ,
\F (\delta_{\da_{1,C}, \, \delta_{H,C}})
>$. Hence and from lemmas~\ref{lem1}-\ref{lem2} we obtain for any $H, C \in
\Div(X)$ the following equality:
\begin{equation} \label{eq1}
h^0(C) - h^0(H) = h^2((\omega) - C) - h^2((\omega)- H) \mbox{.}
\end{equation}


\vspace{0.5cm} \noindent {\bf 4.} \ For any $E \in \Div(X)$ we
denote the Euler characteristic $\chi(E)= h^0(E) - h^1(E) + h^2(E)$.
We fix any $R, S \in \Div(X)$. We consider the following admissible
triple of complete $C_2$-spaces over $k$:
\begin{equation}  \label{ss1}
0 \lto \da_{02} \lto \da \lto \da/\da_{02} \lto 0 \mbox{.}
\end{equation}
The space $\da_{02}$ is a $cfC_2$-space, and the space $\da/\da_{02}
$ is a $dfC_2$-space. Therefore there is the characteristic element
$\delta_{\da_{02}} \in \D_{\da_{12,R}}(\da)$.

Now we consider the following admissible triple of complete
$C_2$-spaces over $k$:
\begin{equation}  \label{ss2}
0 \lto \da_{12,S} \lto \da  \lto \da/\da_{12,S} \lto 0 \mbox{.}
\end{equation}
The subspace $\da_{01}$ uniquely defines an element $\nu_{R',S'} \in
\mu(\da_{12,R'} \mid \da_{12,S'} )$  for any $R', S' \in \Div(X)$ in
the following way. If $R' \le S'$, then we consider the following
admissible triple of $C_1$-spaces:
$$
  0 \lto \da_{1,S'}/\da_{1,R'} \lto \da_{12,S'} / \da_{12,R'}  \lto
  \da_{12,S'}/ (\da_{1,S'} + \da_{12,R'})   \lto 0 \mbox{,}
$$
where $\da_{1,S'}/\da_{1,R'}$ is a discrete $C_1$-space, and
$\da_{12,S'}/ (\da_{1,S'} + \da_{12,R'})$ is a compact $C_1$-space.
Now $\nu_{R',S'} \in \mu(\da_{12,S'}/ \da_{12,R' })$ is equal to
$\delta_0 \otimes 1$, where $\delta_0 ((0))=1$, $\delta_0 \in
\mu(\da_{1,S'}/\da_{1,R'}) $, and $1 ( \da_{12,S'}/( \da_{1,S'} +
\da_{12,R'}) ) =1$, $1 \in \mu(\da_{12,S'}/ (\da_{1,S'} +
\da_{12,R'}))$. For arbitrary $R',S'$ the element $\nu_{R',S'}$ is
defined by the following rule: $\nu_{R',S'} = \nu_{R',T'} \otimes
\nu_{T',S'}$, where $T' \in \Div(X)$ is any. The space $\da_{12,S}$
is a $cC_2$-space, and the space $\da / \da_{12,S}$ is a
$dC_2$-space. Hence there is the characteristic element
$\delta_{\da_{12,S} , \, \nu_{R,S}} \in \D'_{\da_{12,R}}(\da)$.
\begin{lemma} \label{lem3}  We have the following equality:
$$
<\delta_{\da_{02}} \, , \, \delta_{\da_{12,S}, \, \nu_{R,S}} > =
q^{\chi(S)- \chi(R)}  \mbox{.}
$$
\end{lemma}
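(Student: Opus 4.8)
The plan is to follow the pattern of Lemmas~\ref{lem1} and~\ref{lem2}: apply Proposition~\ref{prc} to turn the pairing into a ratio of virtual measures, reduce to comparable divisors, and read off the exponent of $q$ from the adelic complex. Applying Proposition~\ref{prc} with $L=\da_{02}$, $A=\da_{12,S}$, $W=\da_{12,S}$, $F(o)=\da_{12,R}$ and $\eta=\nu_{R,S}$ gives
$$
<\delta_{\da_{02}}\,,\,\delta_{\da_{12,S},\,\nu_{R,S}}> = \frac{\nu_{R,S}}{\mu_{\da_{02},\,\da_{12,R},\,\da_{12,S}}}\mbox{.}
$$
Both $\nu_{R,S}$ and $\mu_{\da_{02},\da_{12,R},\da_{12,S}}$ are multiplicative in $(R,S)$ by the cocycle conditions defining them, and $\chi(S)-\chi(R)$ is additive; choosing a common lower bound of $R$ and $S$ in $\Div(X)$ therefore reduces the statement to the case $R\le S$. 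In that case $\da_{12,R}\subseteq\da_{12,S}$ and the ratio is the $\nu_{R,S}$-measure of the compact open subgroup on which $\mu_{\da_{02},\da_{12,R},\da_{12,S}}$ takes value $1$, namely
$$
Y = (\da_{02}\cap\da_{12,S})/(\da_{02}\cap\da_{12,R}) = \da_{2,S}/\da_{2,R} \subseteq G:=\da_{12,S}/\da_{12,R}\mbox{,}
$$
where I use the adelic intersection identity $\da_{02}\cap\da_{12,E}=\da_{2,E}$, analogous to $\da_0=\da_{01}\cap\da_{02}$ and $\da_{1,E}=\da_{01}\cap\da_{12,E}$.

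Next I would compute $\nu_{R,S}(Y)$ from the admissible triple
$$
0\lto \da_{1,S}/\da_{1,R}\lto G\lto \da_{12,S}/(\da_{1,S}+\da_{12,R})\lto 0
$$
that defines $\nu_{R,S}$ as (counting measure on the discrete subgroup $D:=\da_{1,S}/\da_{1,R}$) $\otimes$ (the mass-one measure on the compact quotient $K:=\da_{12,S}/(\da_{1,S}+\da_{12,R})$). Integrating the characteristic function of $Y$ fibrewise over $K$ yields $\nu_{R,S}(Y)=|Y\cap D|\cdot\bar m(\bar Y)=q^{\dim_k(Y\cap D)-\dim_k(K/\bar Y)}$, where $\bar Y$ is the image of $Y$ in $K$. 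The point is now to recognise these two dimensions cohomologically. The term-wise quotient of the adelic complexes of $\oo_X(S)$ and $\oo_X(R)$ is the adelic complex of the torsion sheaf $\oo_X(S)/\oo_X(R)$, which is the two-term complex
$$
(\da_{1,S}/\da_{1,R})\oplus(\da_{2,S}/\da_{2,R})\xrightarrow{\;d\;} G\mbox{,}\qquad d(a_1,a_2)=a_1-a_2\mbox{.}
$$
Projection to the first summand identifies $\Ker d$ with $Y\cap D$, while $\mathop{\rm coker} d=\da_{12,S}/(\da_{1,S}+\da_{2,S}+\da_{12,R})=K/\bar Y$. Hence $\nu_{R,S}(Y)=q^{\dim_k\Ker d-\dim_k\mathop{\rm coker} d}=q^{\chi(\oo_X(S)/\oo_X(R))}$, and additivity of $\chi$ along $0\to\oo_X(R)\to\oo_X(S)\to\oo_X(S)/\oo_X(R)\to 0$ turns this into $q^{\chi(S)-\chi(R)}$, as required.

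The reduction to comparable divisors and the adelic intersection identities are routine, as in the previous two lemmas. The main obstacle is the measure computation in the second step: unlike the purely discrete $\delta_{H,C}$ of Lemma~\ref{lem1} or the purely compact $1_{P,Q}$ of Lemma~\ref{lem2}, the virtual measure $\nu_{R,S}$ mixes a counting factor with a mass-one factor, and it is exactly this mixing that replaces the single-cohomology answers of those lemmas by the signed Euler characteristic. Making this rigorous requires checking that $Y$ is indeed a compact open subgroup of $G$ (so that the normalisation $\mu_{\da_{02},\da_{12,R},\da_{12,S}}(Y)=1$ is literal), that $Y\cap D=\Ker d=H^0(\oo_X(S)/\oo_X(R))$ is finite so the counting factor equals $q^{\dim_k\Ker d}$, and that $\bar Y$ is open of index $q^{\dim_k(K/\bar Y)}$ so that $\bar m(\bar Y)=q^{-\dim_k\mathop{\rm coker} d}$.
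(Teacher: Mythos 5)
Your proof is correct and takes essentially the same route as the paper's: apply Proposition~\ref{prc}, reduce to the case $R\le S$ by multiplicativity, and identify the resulting measure ratio with $q$ raised to the Euler characteristic of the two-term complex~\eqref{com}, which is the factor-complex of the adelic complexes $\ad_X(\oo_X(S))$ and $\ad_X(\oo_X(R))$. The only difference is expository: the paper compresses the whole measure computation into the single assertion that the exponent is the Euler characteristic of~\eqref{com}, whereas you spell out the implicit steps (the identity $\da_{02}\cap\da_{12,E}=\da_{2,E}$, the fibrewise evaluation $\nu_{R,S}(Y)=q^{\dim_k\Ker d-\dim_k \mathop{\rm coker} d}$), all of which are consistent with the paper's argument.
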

\begin{proof}
We will use proposition~\ref{prc}. From this proposition it follows
that it is enough to consider $R \le S$. In this case, by this
proposition again, we have $<\delta_{\da_{02}}, \delta_{\da_{12,S},
\, \nu_{R,S}} > = q^a$, where $a$ is equal to the Euler
characteristic of the following complex, which has the
finite-dimensional over $k$ cohomology groups:
\begin{equation} \label{com}
\da_{1,S}/\da_{1,R}  \: \oplus \:  \da_{2,S}/\da_{2,R}  \, \lto \,
\da_{12,S} / \da_{12,R} \mbox{.}
\end{equation}
Complex~\eqref{com} is the factor-complex of the adelic complex
$\ad_X(\oo_X(S))$ by the adelic complex $\ad_X(\oo_X(R))$. Therefore
the Euler characteristic of complex~\eqref{com} is the difference of
the Euler characteristics of corresponding adelic complexes. The
lemma is proved.
\end{proof}

From proposition~\ref{prd} it follows that triple~\eqref{ss1} itself
is a $C_2$-dual sequence to triple~\eqref{ss1}, and
triple~\eqref{ss2} is a $C_2$-dual sequence to triple~\eqref{ss2}
when $S \mapsto (\omega) -S$. We have also the
two-dimensional Fourier transforms $\F : \D_{\da_{12,R}}(\da)
\to \D_{\da_{12,(\omega) -R}}(\da)$  and  $\F :
\D'_{\da_{12,R}}(\da)
\to \D'_{\da_{12,(\omega) -R}}(\da)
$.
By the two-dimensional Poisson formulas (see~\cite[th.~2-th.~3]{OsipPar1}) we have $\F (\delta_{\da_{02}}) = \delta_{\da_{02}}$ and
$\F (\delta_{\da_{12,S}, \, \nu_{R,S}})= \delta_{\da_{12, (\omega)-S} , \: \nu_{(\omega)-R,  \, (\omega)-S}} $.
(We used that from proposition~\ref{prd} it follows that $\nu_{R,S} \mapsto \nu_{(\omega)-R,  \, (\omega)-S} $
under the natural isomorphism $\mu (\da_{12,R}  \mid \da_{12,S})  = \mu (\da_{12, (\omega) -R} \mid \da_{12, (\omega) -S}) $.)
From~\cite[prop.~24]{OsipPar1} we have $<\delta_{\da_{02}} \, , \, \delta_{\da_{12,S}, \, \nu_{R,S}} > =
<\F(\delta_{\da_{02}}) \, , \, \F (\delta_{\da_{12,S}, \, \nu_{R,S}}) >$.
Hence and from lemma~\ref{lem3} we have that $\chi(S) - \chi(R)= \chi((\omega)-S) - \chi((\omega)-R)$. If we put $R = (\omega)-S$, then for any $S \in \Div(X)$ we obtain from the previous formula the following equality:
\begin{equation} \label{eq2}
\chi(S) = \chi((\omega)-S) \mbox{.}
\end{equation}

\vspace{0.5cm} \noindent {\bf 5.} \  In section {\bf 1} we introduced the element  $\mu_{L, F(i), F(j)} \in \mu(F(i),F(j)))$ for the admissible monomorphism of $C_2$-spaces $L \to E$. When $L = \da_{02}$, $E = \da$,
$F(i)= \da_{12, R}$, $F(j)= \da_{12, S}$ for $R, S \in \Div(X)$ we will denote this element by $\mu_{R,S}$. From the proof of lemma~\ref{lem3} it follows that
\begin{equation} \label{eq3}
q^{\chi(S) - \chi(R)} = \frac{\nu_{R,S}}{  \mu_{R,S}}   \mbox{.}
\end{equation}

For any $g \in \da^*$ and any $R, S \in \Div(X)$ we have a natural
action: $g^*:   \mu(\da_{12,R} \mid \da_{12,S}) \to \mu(g\da_{12,R}
\mid g\da_{12,S})$. Hence we obtain a central extension (see
also~\cite[\S5.5.3]{OsipPar1}):
$$
1 \lto \dc^* \lto \widehat{\da^*} \stackrel{\pi}{\lto} \da^* {\lto}
1 \mbox{,}
$$
where $\widehat{\da^*} = \{(g, \phi) \, : \, g \in \da^*, \, \phi \in
\mu(\da_{12, 0} \mid g \da_{12, 0}), \, \phi \ne 0 \}$, and $(g_1,
\phi_1)(g_2, \phi_2) = (g_1g_2, \phi_1 \otimes g_1^*(\phi_2))$.
(Here $\da_{12,0}$ is the  group connected with the zero divisor on
$X$.) For any $g_1, g_2 \in \da^*$ we denote $\langle g_1, g_2
\rangle = [\widehat{g_1}, \widehat{g_2}] \in \dc^*$, where
$\widehat{g_i} \in \widehat{\da^*}$ are any such that
$\pi(\widehat{g_i})=g_i$. The element $\langle g_1, g_2  \rangle$
does not depend on the choice of
 appropriate elements $\widehat{g_i}$. From~\cite{O} it follows
the following equality:
\begin{equation} \label{ext} \langle g_1, g_2  \rangle =
\prod_{x \in D} \ q^{-[k(x)\, : \,k] \, ({g_1}_{x,D} , \,
{g_2}_{x,D})_{x,D}} \mbox{,}
\end{equation}
where $(\cdot, \cdot)_{x,D}$ is the composition of the maps:
$K_{x,D}^* \times K_{x,D}^* \to K_2(K_{x,D})
\stackrel{\partial_2}{\to} \overline{K}_{x,D}^{\, *}
\stackrel{\partial_1}{\to} \dz$.

For any $E \in \Div(X)$ we choose an element $j_{1,E} \in
\da_{01}^*$ such that $\da_{1,E}= j_{1,E} \da_{1,0}$, and an element
$j_{2,E} \in \da_{02}^*$ such that $\da_{2,E}= j_{2,E} \da_{2,0}$,
where we take the product inside the ring $\da$. Now
from~\cite[\S2.2]{P} and from~\eqref{ext} it follows the following
formula for any $C, H \in \Div(X)$ ($(C,H)$ means the intersection
index of divisors $C$ and $H$ on $X$):
\begin{equation} \label{eq4}
\langle j_{2,C}, j_{1,H}  \rangle = q^{ -(C , \, H)}  \mbox{.}
\end{equation}
Since we can take $j_{1, E_1+E_2} = j_{1,E_1} j_{2,E_2}$ and $j_{2,
E_1+E_2} = j_{2,E_1} j_{2,E_2}$, we obtain $j_{1,E_1} \da_{1, E_2}=
\da_{1, E_1 + E_2}$ and $j_{2,E_1} \da_{2, E_2}= \da_{2, E_1 + E_2}$
for any $E_1, E_2 \in \Div(X)$. Hence we have $j_{1, E}^{\, *}
(\nu_{R,S}) = \nu_{R+E, \, S+E} $ and $j_{2, E}^{\, *} (\mu_{R,S}) =
\mu_{R+E, \, S+E} $
 for any $R,S, E \in \Div(X)$. For any $C \in \Div(X)$ we choose
 $\widehat{j_{2,C}}= (j_{2,C}, \nu_{0,C}) \in \widehat{\da^*}$
 and $\widehat{j_{1,(\omega) -C}}= (j_{1, (\omega)- C}, \mu_{0,(\omega)- C}) \in
 \widehat{\da^*}$. We have
 \begin{multline} \label{eq5}
\langle j_{2,C}, j_{1,(\omega)-C} \rangle = \frac{\widehat{j_{2,C}}
\; \widehat{j_{1,(\omega) -C}}}{\widehat{j_{1,(\omega) -C}} \;
\widehat{j_{2,C}}}= \frac{ \nu_{0,C} \otimes j_{2,C}^*(\mu_{0,
(\omega)-C}) }{ \mu_{0,(\omega)- C} \otimes j_{1,(\omega) -C}^*
(\nu_{0,C} )
 }=
\frac{ \nu_{0,C} \otimes \mu_{C, (\omega)} }{ \mu_{0,(\omega)- C}
\otimes \nu_{(\omega)-C,  (\omega)}
 }= \\ =
\frac{ \nu_{0,C} \otimes \mu_{C, (\omega)-C} \otimes \mu_{(\omega )-
C, (\omega)} }{\mu_{0,C} \otimes \mu_{C,(\omega)- C} \otimes
\nu_{(\omega)-C, (\omega)}
 }=
 \frac{\nu_{0,C}}{\mu_{0,C}} \; \frac{\mu_{(\omega )-
C, (\omega)}}{\nu_{(\omega)-C, (\omega)}}  \mbox{.}
\end{multline}
From~\eqref{eq3} and~\eqref{eq2} we obtain
$\frac{\nu_{0,C}}{\mu_{0,C}}= \frac{\mu_{(\omega )- C,
(\omega)}}{\nu_{(\omega)-C, (\omega)}} = q^{\chi(C) - \chi(0)}$.
Therefore from~\eqref{eq5} and~\eqref{eq4} we have
$2(\chi(C)-\chi(0))= -(C, (\omega)-C)$ for any $C \in \Div(X)$. From
the last equality and formula~\eqref{eq1} we obtain the Riemann-Roch
theorem in the following form.
\begin{Th}
For any $C \in \Div(X)$ and $\omega \in \Omega^2_{k(X)}$, $\omega
\ne 0$ we have the following equality
$$
h^0(C) - h^1(C) + h^0((\omega)-C)= h^0(0)- h^1(0)+h^0((\omega)) -
\frac{1}{2} \, (C, (\omega)-C)  \mbox{.}
$$
\end{Th}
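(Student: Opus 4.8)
The plan is to derive the theorem purely algebraically from the two identities already in hand, since all of the analytic content has been spent producing them. The first input is formula~\eqref{eq1}, namely $h^0(C)-h^0(H) = h^2((\omega)-C)-h^2((\omega)-H)$ for all $C,H \in \Div(X)$. The second is the intersection-theoretic identity $2(\chi(C)-\chi(0)) = -(C,(\omega)-C)$ obtained just above from \eqref{eq5}, \eqref{eq4}, \eqref{eq3} and \eqref{eq2}, where $\chi(E)=h^0(E)-h^1(E)+h^2(E)$. What remains is a short bookkeeping manipulation combining these two facts.

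First I would eliminate the second-cohomology contributions, since $h^2(C)$ and $h^2(0)$ occur in $\chi$ but not in the statement of the theorem. The intersection identity carries the term $h^2(C)-h^2(0)$, which I must rewrite through $h^0$ of the complementary divisor. To this end I specialize formula~\eqref{eq1}, replacing $C$ by $(\omega)-C$ and $H$ by $(\omega)$, so that $(\omega)-H=0$; the right-hand side then collapses to $h^2((\omega)-((\omega)-C))-h^2((\omega)-(\omega)) = h^2(C)-h^2(0)$, giving the relation $h^0((\omega)-C)-h^0((\omega)) = h^2(C)-h^2(0)$.

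Next I would expand $\chi(C)-\chi(0) = (h^0(C)-h^0(0)) - (h^1(C)-h^1(0)) + (h^2(C)-h^2(0))$ and substitute the previous relation for the last bracket, converting $\chi(C)-\chi(0)$ into an expression involving only $h^0$ and $h^1$. Inserting this into $2(\chi(C)-\chi(0)) = -(C,(\omega)-C)$ and dividing by $2$ reproduces the displayed equality verbatim; the coefficient $\tfrac12$ in the theorem is precisely the one introduced by this division, while the appearance of $h^0((\omega)-C)$ and $h^0((\omega))$ in place of $h^2(C)$ and $h^2(0)$ is exactly the effect of the specialized \eqref{eq1}.

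I do not expect a genuine obstacle at this final step: once \eqref{eq1} and the intersection identity are available, the argument is routine. The only points requiring care are choosing the substitution $C\mapsto(\omega)-C$, $H\mapsto(\omega)$ in \eqref{eq1} so that the term $h^2(C)-h^2(0)$ is matched precisely (this choice being forced by the requirement $(\omega)-H=0$), and tracking the factor of $2$ consistently through the division. The substantive difficulty of the theorem resides entirely in the earlier sections, where the Poisson formulas, the pairing of Proposition~\ref{prc}, and the commutator computation in the central extension were used to establish these two inputs.
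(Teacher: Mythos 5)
Your proposal is correct and is essentially the paper's own proof: the paper likewise deduces the theorem by combining formula~\eqref{eq1} with the identity $2(\chi(C)-\chi(0))=-(C,(\omega)-C)$ obtained from \eqref{eq2}--\eqref{eq5}, leaving to the reader precisely the bookkeeping you spell out, namely the substitution $C\mapsto(\omega)-C$, $H\mapsto(\omega)$ in \eqref{eq1} to get $h^0((\omega)-C)-h^0((\omega))=h^2(C)-h^2(0)$ and then dividing by $2$. Your algebra, including the provenance of the factor $\tfrac{1}{2}$, checks out.
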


\vspace{0.3cm}

\noindent D. V. Osipov \\  Steklov Mathematical Institute RAS \\
{\em
E-mail:}  $ \rm {d}_{-} osipov@mi.ras.ru$ \\

\noindent A. N. Parshin \\ Steklov Mathematical Institute RAS \\
{\em E-mail:}  $\rm parshin@mi.ras.ru$


\begin{thebibliography}{99}

\bibitem{O} Osipov D.V., {\em Central extensions and reciprocity laws on algebraic surfaces}, (Russian) Mat. Sb.
196:10 (2005), 111-136; translation in Sb. Math. 196:10 (2005),
1503-1527; e-print arXiv:math/0501155 [math.NT].

\bibitem{Osip} Osipov D., {\em Adeles on $n$-dimensional schemes and categories
$C_n$}, International Journal of Mathematics, vol. 18, no. 3 (2007),
269-279; e-print  arXiv:math/0509189 [math.AG].

\bibitem{OsipPar1} Osipov D.V., Parshin A.N.,  {\em Harmonic analisys on local fields and adelic spaces.
       I},
       Izvestiya RAN: Ser. Mat., 2008, 72:5, pp. 77-140; english translation in Izvestiya: Mathematics,
       2008, 72:5, pp. 915-976; e-print  arXiv:0707.1766
       [math.AG].

\bibitem{OsipPar2} Osipov D.V., Parshin A.N.,  {\em Harmonic analisys on local fields and adelic spaces.
       II},
Izvestiya RAN: Ser. Mat., 2011, 75:4, pp. 91-164; english
translation in Izvestiya: Mathematics,
       2011, 75:4, pp. 749-814; e-print arXiv:0912.1577 [math.AG].


\bibitem{P} Parshin A.N., {\em Chern classes, adeles and
$L$-functions}, Journal f\"{u}r die reine and angewandte Mathematik,
Band 341, 1983, pp. 174-192.

\bibitem{P1}  Parshin A. N., {\em Higher-dimensional local fields and L-functions},
Invitation to higher local fields (M\"unster, 1999),  199--213
(electronic), Geom. Topol. Monogr., 3, Geom. Topol. Publ., Coventry,
2000; e-print arXiv:math/0012151 [math.NT].

\bibitem{S} Serre J.-P.
{\em Groupes algebriques et corps de classes}, Publications de
l'institut de mathematique de l'universite de Nancago, VII. Hermann,
Paris, 1959.



\end{thebibliography}
\end{document}